\newtheorem{theorem}{Theorem}[section]
\theoremstyle{definition}
\theoremstyle{definition}
\newtheorem{corollary}[theorem]{Corollary}
\theoremstyle{definition}
\newtheorem{remark}[theorem]{Remark}
\theoremstyle{remark}
\newtheorem{example}[theorem]{Example}
\newcommand{\depth}{\operatorname{depth}}
\newcommand{\Frac}{\operatorname{Frac}}
\newcommand{\Z}{\mathbb{Z}}
\newcommand{\F}{\mathbb{F}}
\newcommand{\sk}{\mathcal{k}}
\newcommand{\fm}{\mathfrak{m}}
\newcommand{\fp}{\mathfrak{p}}
\newcommand{\fn}{\mathfrak{n}}
\newcommand{\rperf}{R_{\operatorname{perf}}}
\newcommand{\aperf}{A_{\operatorname{perf}}}
\newcommand{\lcm}{\operatorname{lcm}}
\crefname{Theoremx}{Theorem}{Theorems}
\titleformat{\section}[block]{\large\scshape\bfseries\filcenter}{\thesection.}{1em}{}
\titleformat{\subsection}[hang]{\large\scshape\bfseries}{\thesubsection}{1em}{}	
\titleformat{\subsubsection}[hang]{\large\scshape\bfseries}{\thesubsubsection}{1em}{}
\begin{document}

\title{The perfection can be a non-coherent GCD domain}

\author[Austyn Simpson]{Austyn Simpson}
\thanks{Simpson was supported by NSF postdoctoral fellowship DMS \#2202890.}
\address{Department of Mathematics, University of Michigan, Ann Arbor, MI 48109 USA}
\email{austyn@umich.edu}

\begin{abstract}
We show that there exists a complete local Noetherian normal domain of prime characteristic whose perfection is a non-coherent GCD domain, answering a question of Patankar in the negative concerning characterizations of $F$-coherent rings. This recovers and extends a result of Glaz using tight closure methods.
\end{abstract}
\maketitle

\section{Introduction}\label{sec:intro}
Let $R$ be a Noetherian ring of prime characteristic $p>0$. The \emph{perfection} (or \emph{perfect closure}) of $R$, denoted in this note by $\rperf$, is defined as
\begin{align*}
    \rperf&=\lim\limits_{\longrightarrow}\left(R\stackrel{F}{\to} R\stackrel{F}{\to}\cdots\right)
\end{align*}
where $F:R\rightarrow R$ is the Frobenius map $r\mapsto r^p$. If $R$ is further assumed to be a domain with fraction field $K=\Frac(R)$, then the \emph{absolute integral closure} of $R$ is defined to be the integral closure of $R$ in a choice of algebraic closure $\overline{K}$. These two objects $\rperf\subseteq R^+$ have over the past few decades seen substantial use in subjects where one usually only a priori considers Noetherian rings, despite being highly non-Noetherian themselves. It is natural to ask which ``Noetherian-like" properties these objects enjoy. Two such notions of interest in this note are \emph{coherence} (that is, all finitely generated ideals are finitely presented) and the property of being a \emph{GCD domain} (that is, a domain in which the intersection of two principal ideals is principal).

In the case of $R^+$, this can be effectively hopeless due to the fact that $R^+$ usually fails to be coherent regardless of characteristic \cite{AH97,Asg17,Pat22}. For $\rperf$ however, the situation is not as dire: the perfect closure of a regular local ring of prime characteristic is coherent \cite[Proposition 3.3(1)]{Shi11} and its finitely generated ideals enjoy finite primary decomposition \cite[Corollaire 1]{Rad83}, essentially both because of Kunz's theorem \cite{Kun69}. The study of Noetherian rings with coherent perfect closures was pioneered by Shimomoto in \cite{Shi11} who called them \emph{$F$-coherent}, suggesting connections to other classes of singularities defined via the Frobenius map. The results of \cite{Shi11} for $F$-coherent rings $R$ may be summarized as follows, where for simplicity we assume that $R$ is reduced and that $F_*R$ is a finite $R$-module.
\begin{enumerate}
    \item $F$-coherence is stable under localization and descends under faithfully flat maps.
    \item $F$-coherent rings have purely inseparable normalizations.
    \item Regular rings, purely inseparable extensions of regular rings, and purely inseparable subrings of regular rings, are all $F$-coherent.\label{summary-3}
    \item If $(R,\fm)$ is local and $F$-coherent, then $\rperf$ is a big Cohen-Macaulay algebra.\label{summary-4}
    \item Tight closure coincides with Frobenius closure in the $F$-coherent setting. In particular, tight closure localizes, weak $F$-regularity and $F$-purity are equivalent, and $F$-injectivity and $F$-rationality are equivalent.\label{summary-5}
\end{enumerate}
At present, the major shortcoming of this theory is the scarcity of examples. In fact, the only known examples of $F$-coherent rings are those which come from a purely inseparable extension or inclusion of a regular ring as in (\ref{summary-3}). 

This subject has been further explored in \cite{Asg17} using homological methods. For example, it is observed in \cite{Asg17} (relying heavily on ideas introduced in \cite{AH97}) that if $\rperf$ is coherent then it is a GCD domain. The relationship between GCD domains and the coherence property has a peculiar history; for instance, it had been wondered for some time in the broader context of non-Noetherian ring theory whether there exists a non-coherent GCD domain. Such a ring was first constructed in \cite{Gla01} (see Section 1 of \emph{op. cit.} for further context) and a different construction via ultraproducts was provided in \cite{OS03}. The question of whether this phenomenon may occur for the perfect closure of a Noetherian ring (and in particular whether $\rperf$ being a GCD domain characterizes $F$-coherent rings) is posed in \cite[Remark 4.9]{Pat22}. The content of this note is the following:

\begin{theorem}\label{theorem:main-theorem} (= \Cref{cor:gcd-fpure} + \Cref{ex:fpure-ufd-non-CM,ex:fpure-ufd-non-fregular})
There exists a complete local Noetherian normal domain $R$ of prime characteristic $p>0$ such that $\rperf$ is a non-coherent GCD domain.
\end{theorem}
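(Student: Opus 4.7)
The plan is to decouple the GCD condition from the non-coherence condition, establishing each by separate means. The first will be supplied by \Cref{cor:gcd-fpure}, which asserts (roughly) that the perfection of a complete local F-pure Noetherian UFD is a GCD domain; the second will come from exhibiting F-pure UFDs that fail one of the structural consequences of F-coherence recorded in Shimomoto's summary above.

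For the GCD assertion, I would write $\rperf = \bigcup_{e\ge 0} R^{1/p^e}$ and observe that each $R^{1/p^e}$ is abstractly isomorphic to $R$ via the $p^e$-th Frobenius, hence is a UFD whenever $R$ is. In the complete local F-finite setting (which covers the intended examples), F-purity is equivalent to F-splitness, so the inclusion $R \hookrightarrow R^{1/p}$ admits an $R$-linear retraction $\pi$. A direct splitting calculation then yields the contraction identity
\[
aR^{1/p}\cap bR^{1/p}\cap R \;=\; aR\cap bR \qquad \text{for all } a,b\in R,
\]
since any $x = ay = bz$ in the left-hand side with $x \in R$ satisfies $x = \pi(x) = a\pi(y) = b\pi(z) \in aR\cap bR$. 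Iterating up the Frobenius tower and invoking the UFD structure at each finite level converts this ideal identity into a compatibility of LCMs, which suffices to exhibit $\rperf$ as a GCD domain by a standard direct-limit argument.

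For the non-coherence assertion, I would produce complete local Noetherian normal F-pure UFDs $R$ that fail F-coherence by virtue of items (\ref{summary-4}) or (\ref{summary-5}) in the introduction. If $R$ is F-coherent and F-pure, then by (\ref{summary-4}) $\rperf$ is a big Cohen--Macaulay $R$-algebra; since F-purity makes $R \hookrightarrow \rperf$ pure, a standard pure-subring argument (contracting a regular sequence on $\rperf$ to a regular sequence on $R$) forces $R$ itself to be Cohen--Macaulay. Likewise, by (\ref{summary-5}) F-coherence collapses F-purity to weak F-regularity. Hence any F-pure UFD that is either non-Cohen--Macaulay or F-pure but not weakly F-regular is automatically non-F-coherent, yielding a non-coherent $\rperf$. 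Such examples arise classically from hypersurface and quotient constructions (e.g., high-dimensional Fermat-type diagonals with appropriate congruence conditions on $p$), to be recorded as \Cref{ex:fpure-ufd-non-CM,ex:fpure-ufd-non-fregular}.

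The main obstacle I anticipate lies in the first part: the contraction identity cleanly pulls principal-ideal intersections back to $R$, but to get a GCD domain one must additionally verify that the LCMs $\lcm_{R^{1/p^e}}(a,b)$ and $\lcm_{R^{1/p^{e'}}}(a,b)$ are associates in $R^{1/p^{e'}}$ for $e' \ge e$, so that the LCM stabilizes in the colimit. This should be handled by re-applying the contraction identity along the inclusion $R^{1/p^e}\subseteq R^{1/p^{e'}}$ (using the abstract Frobenius isomorphism $R \cong R^{1/p^e}$ to transport F-splitness), and exploiting UFD factoriality at each finite stage to convert the resulting ideal equality into the desired equality of associates.
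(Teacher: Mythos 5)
Your global architecture coincides with the paper's: GCD-ness of $\rperf$ is deduced from $R$ being a UFD, non-coherence is obtained by playing $F$-purity against the structural consequences of $F$-coherence, and the witnesses are the same invariant-ring and diagonal-hypersurface examples. The non-coherence half of your plan is correct and is essentially \Cref{theorem:f-pure-f-regular} (Shimomoto's Proposition 3.12 and Corollary 3.15), which the paper invokes through \Cref{cor:gcd-fpure}; your alternative derivation of the Cohen--Macaulay consequence from item (\ref{summary-4}) --- contracting a regular sequence on the big Cohen--Macaulay algebra $\rperf$ along the pure map $R\to\rperf$ --- is sound, and even avoids the excellence hypothesis the paper needs in case (\ref{cor:gcd-fpure-2}) (harmless here, since complete local rings are excellent). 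You do leave implicit the one delicate point about the examples: it is the \emph{complete} rings that must be UFDs, and the paper has to cite Fossum--Griffith for the completed invariant ring and parafactoriality (SGA2) for the hypersurface; completions of UFDs are not UFDs in general.

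The genuine gap is in your GCD step, precisely at the obstacle you flag. The splitting-based contraction identity can only ever yield one of the two divisibilities. With $a,b\in R$ and $\ell_e:=\lcm_{R^{1/p^e}}(a,b)$, contraction gives $\ell_{e'}R^{1/p^{e'}}\cap R^{1/p^e}=\ell_e R^{1/p^e}$ for $e'\geq e$, hence only $\ell_{e'}\mid\ell_e$ in $R^{1/p^{e'}}$; associateness requires the converse $\ell_e\mid\ell_{e'}$, i.e.\ that $\ell_e$ continue to divide every common multiple one level up, and no iteration of the retraction $\pi$ produces this: a retraction controls how ideals \emph{contract}, whereas the needed statement concerns how principal-ideal intersections \emph{extend}, which is governed by flatness of $R\subseteq R^{1/p}$ --- and by Kunz's theorem flatness fails in every singular example at hand. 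What rescues the claim is a tool you mention but assign the wrong job: the isomorphism $R\cong R^{1/p^e}$, $x\mapsto x^{1/p^e}$, transports prime factorizations (not just $F$-splitness). If $a=u\pi_1^{s_1}\cdots\pi_m^{s_m}$ and $b=v\pi_1^{t_1}\cdots\pi_m^{t_m}$ in $R$, then in $R^{1/p^e}$ the irreducible factorizations are $a=u\prod_i\bigl(\pi_i^{1/p^e}\bigr)^{p^e s_i}$ and $b=v\prod_i\bigl(\pi_i^{1/p^e}\bigr)^{p^e t_i}$, with the $\pi_i^{1/p^e}$ pairwise non-associate (raise any associateness relation to the $p^e$-th power), whence $\ell_e=\prod_i\pi_i^{\max\{s_i,t_i\}}$ up to units for \emph{every} $e$: the lcm is constant up the tower, not merely eventually stable, and the splitting is never used. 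This is exactly the content of the identity $\bigl(a^{p^{N_1+N_2}}:_R b^{p^{N_1+N_2}}\bigr)=\bigl(c^{p^{N_2}}\bigr)$ in \Cref{eq:colon} on which the paper's \Cref{thm:ufd-gcd} runs (an identity which, per the remark following it, characterizes UFDs), and it is why the paper obtains the stronger statement that $\rperf$ is a GCD domain for an \emph{arbitrary} Noetherian UFD of characteristic $p$, with no $F$-purity, completeness, or $F$-finiteness hypotheses.
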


To find such rings, we first observe that if $R$ is a prime characteristic Noetherian UFD, then $\rperf$ is a GCD domain (\Cref{thm:ufd-gcd}). We then appeal to (\ref{summary-5}) above to conclude that any $F$-pure UFD which is not Cohen--Macaulay (or even just not weakly $F$-regular) will satisfy this requirement. Such rings exist by work of Bertin \cite{Ber67} and Fossum-Griffith \cite{FG75}, considering certain rings of invariants in characteristic $2$ (\cref{ex:fpure-ufd-non-CM}). We also provide Cohen--Macaulay examples by considering certain diagonal hypersurfaces (\cref{ex:fpure-ufd-non-fregular}).

\subsection*{Acknowledgments}
I am grateful to Anurag Singh for helpful discussions. I thank Bruce Olberding, Shravan Patankar, Thomas Polstra, Kazuma Shimomoto, and the anonymous referee for helpful suggestions on a previous draft of this note.

\section{Results}\label{sec:results}
Recall that a domain $A$ (not necessarily Noetherian) is a \emph{GCD domain} if for any $r,s\in A$, the ideal $rA\cap sA$ is principal (equivalently, the ideal $(r:_A s)$ is principal). Asgharzadeh has observed using homological methods that if $R$ is a local Noetherian $F$-coherent domain, then $\rperf$ is a GCD domain \cite[Corollary 3.7]{Asg17}. In this section we construct Noetherian rings $R$ which are not $F$-coherent but whose perfections are GCD domains. We begin with an elementary proof that if $R$ is a prime characteristic UFD, then $\rperf$ is a GCD domain irrespective of any coherence assumptions.

\begin{theorem}\label{thm:ufd-gcd}
Let $R$ be a Noetherian UFD of prime characteristic $p>0$. Then $\rperf$ is a GCD domain.
\end{theorem}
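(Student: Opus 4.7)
The plan is to work in the concrete realization $\rperf = \bigcup_{e\geq 0} R^{1/p^e}$, which is available because $R$ is a domain of characteristic $p$ (so Frobenius is injective and each transition map in the colimit $\varinjlim (R \xrightarrow{F} R \xrightarrow{F} \cdots)$ is an embedding). Given $a, b \in \rperf$, I would choose a common exponent $e \geq 0$ such that $r := a^{p^e}$ and $s := b^{p^e}$ both lie in $R$. Because $R$ is a UFD, the element $m := \lcm(r,s) \in R$ exists, and we may write $m = r r' = s s'$ for some $r', s' \in R$. The claim is that
\begin{equation*}
    a \rperf \cap b \rperf \;=\; m^{1/p^e}\, \rperf,
\end{equation*}
which would establish that $\rperf$ is a GCD domain.

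The containment $\supseteq$ is immediate from the identity $m^{1/p^e} = a\,(r')^{1/p^e} = b\,(s')^{1/p^e}$. For the reverse containment, suppose $c \in a\rperf \cap b\rperf$, so that $c = au = bv$ for some $u,v \in \rperf$. I would choose $N \geq e$ large enough that $c^{p^N}, u^{p^N}, v^{p^N}$ all lie in $R$. Raising $c = au = bv$ to the $p^N$-th power yields
\begin{equation*}
    c^{p^N} \;=\; r^{p^{N-e}}\,u^{p^N} \;=\; s^{p^{N-e}}\,v^{p^N} \;\in\; r^{p^{N-e}}R \,\cap\, s^{p^{N-e}}R.
\end{equation*}
In the UFD $R$ this intersection equals $\lcm(r^{p^{N-e}}, s^{p^{N-e}})\,R$, and the key arithmetic identity $\lcm(x^n, y^n) = \lcm(x,y)^n$ (an immediate consequence of unique factorization applied prime-by-prime) gives $\lcm(r^{p^{N-e}}, s^{p^{N-e}}) = m^{p^{N-e}}$. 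So $c^{p^N} = m^{p^{N-e}} w$ for some $w \in R$, and taking $p^N$-th roots gives $c = m^{1/p^e} \, w^{1/p^N} \in m^{1/p^e}\rperf$, as required.

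The argument is essentially elementary once the exponent-bookkeeping is set up, so I do not anticipate a genuine obstacle. The only substantive ingredient is the commutation $\lcm(x^n, y^n) = \lcm(x,y)^n$ in a UFD; this is precisely what lets the UFD structure on $R$ transfer, in the weaker form of the GCD property, to the typically non-Noetherian ring $\rperf$ after adjoining $p$-power roots. One may wish to separately record that $\rperf$ is a domain (since $R$ is reduced and Frobenius is injective), which is needed even to speak of a GCD domain structure.
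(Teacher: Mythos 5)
Your proof is correct and follows essentially the same route as the paper: raise $a,b$ to a common $p$-power to land in $R$, use unique factorization to see that forming the $\lcm$ (equivalently, the colon ideal, which is what the paper works with) commutes with taking $p^e$-th powers, and then extract roots in the perfect ring $\rperf$. The only cosmetic difference is that you phrase the GCD condition via $a\rperf\cap b\rperf$ while the paper uses $(a:_{\rperf}b)$, an equivalence the paper itself records.
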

\begin{proof}
    Let $a,b\in \rperf$. We will show that $(a:_{\rperf} b)$ is a principal ideal. There exists $N_1\gg 0$ such that $a^{p^{N_1}},b^{p^{N_1}}\in R$. Let
    \begin{align*}
        a^{p^{N_1}}=u\pi_1^{s_1}\cdots \pi_m^{s_m}\\
        b^{p^{N_1}}=v\pi_1^{t_1}\cdots\pi_m^{t_m}
    \end{align*}
be prime factorizations of $a^{p^{N_1}}$ and $b^{p^{N_1}}$ where $u,v\in R$ are units, $\pi_i\in R$ are distinct irreducible elements, and $s_i,t_i\geq 0$. Note that 
\begin{align*}
    \left(a^{p^{N_1}}:_R b^{p^{N_1}}\right)=\left(\frac{\lcm\left(a^{p^{N_1}},b^{p^{N_1}}\right)}{b^{p^{N_1}}}\right)=(c)
\end{align*}
where $c:=\pi_1^{\max\{0,s_1-t_1\}}\cdots \pi_m^{\max\{0,s_m-t_m\}}$. We claim that $(a:_{\rperf} b)=(c^{1/p^{N_1}})\rperf$, where the $\supseteq$ direction is clear. Let $\eta\in(a:_{\rperf} b)$ and write $\eta b=\xi a$ for some $\xi\in\rperf$. Choose $N_2\gg 0$ such that $\eta^{p^{N_1+N_2}},\xi^{p^{N_1+N_2}}\in R$. We see that
\begin{align}
\eta^{p^{N_1+N_2}}\in \left(a^{p^{N_1+N_2}}:_R b^{p^{N_1+N_2}}\right)=\left(\pi_1^{\max\{0,p^{N_2}(s_1-t_1)\}}\cdots \pi_m^{\max\{0,p^{N_2}(s_m-t_m)\}}\right)=\left(c^{p^{N_2}}\right).\label{eq:colon}
\end{align}
Write $\eta^{p^{N_1+N_2}}=rc^{p^{N_2}}$ for some $r\in R$. Taking $p^{N_1+N_2}$-th roots, we have the equation $\eta=r^{1/p^{N_1+N_2}} c^{1/p^{N_1}}$ (in $\rperf$), so $\eta\in(c^{1/p^{N_1}})\rperf$ as desired.
\end{proof}
\begin{remark}
    If $R$ is not a UFD then the equality $(f:_R g)^{[p^e]}=(f^{p^e}:_R g^{p^e})$ in \Cref{eq:colon} need not hold; in fact, such behavior characterizes local prime characteristic UFDs by \cite[Theorem 3.5]{Zha09}.
\end{remark}
We briefly recall for the reader the notions of $F$-purity and weak $F$-regularity. Let $R$ be a Noetherian ring of prime characteristic $p>0$. $R$ is said to be \emph{$F$-pure} if the Frobenius map $F:R\rightarrow F_* R$ is a pure $R$-module homomorphism. If $F_* R$ is a finite $R$-module or if $R$ is a complete local ring, then $R$ is $F$-pure if and only if $R\rightarrow F_* R$ splits as a map of $R$-modules. $R$ is said to be \emph{cyclically $F$-pure} if $I=I^F$ for all ideals $I\subseteq R$, where $I^F:=I\rperf\cap R$ denotes the \emph{Frobenius closure of $I$}. $F$-purity always implies cyclic $F$-purity, and the converse holds, for example, for rings which are excellent and reduced \cite{Hoc77}. Finally, $R$ is said to be \emph{weakly $F$-regular} if $I=I^*$ for all ideals $I\subseteq R$, where $$I^*=\left\{r\in R\mid \text{there exists }c\in R\setminus\bigcup\limits_{\fp\in\min(R)} \fp\text{ such that } cr^{p^e}\in I^{[p^e]}\text{ for all } e\gg 0\right\}$$ denotes the \emph{tight closure of $I$}. We recall the following theorem of Shimomoto since it is a key ingredient in our construction.

\begin{theorem}\cite[Proposition 3.12 \& Corollary 3.15]{Shi11}\label{theorem:f-pure-f-regular}
    Let $R$ be a reduced cyclically $F$-pure local ring of prime characteristic $p>0$ whose perfection $\rperf$ is coherent. Then $R$ is weakly $F$-regular. If $R$ is further assumed to be excellent, then $R$ is Cohen--Macaulay. 
\end{theorem}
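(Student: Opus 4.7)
The plan is to reduce the statement to the equality of tight and Frobenius closures on $R$, namely $I^{*}=I^{F}$ for every ideal $I\subseteq R$. Granting this, cyclic $F$-purity yields $I^{F}=I$ for every $I$, hence $I=I^{*}$ for every $I$, which is weak $F$-regularity by definition. For the Cohen-Macaulay conclusion in the excellent case, I would invoke the classical theorem of Hochster-Huneke that excellent weakly $F$-regular local rings are Cohen-Macaulay: colon capture for parameter ideals gives $((f_{1},\ldots,f_{i}):_{R}f_{i+1})\subseteq (f_{1},\ldots,f_{i})^{*}$, which combined with $I^{*}=I$ forces every system of parameters to be a regular sequence.

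The crux is therefore the inclusion $I^{*}\subseteq I^{F}$. I would take $r\in I^{*}$ and a test element $c\in R^{\circ}$ with $cr^{p^{e}}\in I^{[p^{e}]}$ for all $e\gg 0$. Since $R$ is reduced and $\rperf$ is perfect, extracting $p^{e}$-th roots in $\rperf$ produces $c^{1/p^{e}}r\in I\rperf$ for all $e\gg 0$. Thus the colon ideal $K:=(I\rperf:_{\rperf}r)$ contains the ascending chain of principal ideals $(c^{1/p^{e_{0}}})\subseteq (c^{1/p^{e_{0}+1}})\subseteq\cdots$, using that $(c^{1/p^{e+1}})^{p}=c^{1/p^{e}}$.

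Coherence of $\rperf$ enters as follows: the ideal $I\rperf$ is finitely generated in $\rperf$, so the colon $K$ is itself finitely generated. Moreover, $c$ avoids the minimal primes of $R$ by hypothesis, and since $\Spec\rperf\to\Spec R$ is a bijection identifying minimal primes, each $c^{1/p^{e}}$ is a non-zero-divisor on $\rperf$. I would then argue that no proper prime $\fq$ of $\rperf$ can contain $K$: any such $\fq$ must contain every $c^{1/p^{e}}$ and hence $c$, and by descending the chain to a single finite stage using the finite presentation of $\rperf/I\rperf$ provided by coherence, one should obtain a contradiction. This forces $K=\rperf$, hence $r\in I\rperf$, so $r\in I\rperf\cap R=I^{F}$ as required.

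The main obstacle is this last step — converting the asymptotic containment $c^{1/p^{e}}\in K$ for all $e$ into the global equality $K=\rperf$. This is precisely where coherence rather than just finite presentation in isolation is indispensable: one cannot naively pass to a limit inside the non-Noetherian ring $\rperf$, so the argument must exploit that a finitely presented module over a coherent ring is rigid enough that the asymptotic chain of annihilating elements cannot remain trapped inside a proper ideal.
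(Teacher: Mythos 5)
Your global skeleton---reduce everything to the equality $I^{*}=I^{F}$, deduce weak $F$-regularity from cyclic $F$-purity, and invoke Hochster--Huneke for the Cohen--Macaulay conclusion in the excellent case---is exactly the route the paper attributes to Shimomoto, and using coherence to make the colon $K=(I\rperf:_{\rperf}r)$ finitely generated is the correct entry point. But your final step fails as written. There is no contradiction in a proper prime $\fq\subseteq\rperf$ containing $K$: since $\fq$ is radical and $(c^{1/p^{e}})^{p^{e}}=c$, \emph{any} prime containing $c$ automatically contains every root $c^{1/p^{e}}$, and nothing prevents proper primes from containing $c$ (it merely avoids the minimal primes). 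Likewise, finite generation of $K$ does not let you ``descend the chain to a single finite stage'': in a non-Noetherian ring a finitely generated, even principal, proper ideal can contain a strictly ascending, non-stabilizing chain of principal ideals, e.g.\ $(st)\subsetneq (st^{1/p})\subsetneq (st^{1/p^{2}})\subsetneq\cdots\subseteq (s)$ inside the perfection of $\F_p[s,t]$. So the asymptotic containment $c^{1/p^{e}}\in K$ together with finite generation of $K$ yields, by itself, nothing.

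The missing idea---which the paper explicitly flags in the sentence following the theorem (``uses a valuation argument'')---is to measure values rather than chase primes. Since $R$ is reduced, choose a minimal prime $\fp$ of $R$ with $c\notin\fp$ and a valuation $v\geq 0$ on the Noetherian local domain $R/\fp$ with $v>0$ on the maximal ideal (a DVR dominating $R/\fp$ suffices); $v$ extends uniquely to the perfect closure $(R/\fp)_{\operatorname{perf}}$ with values in $\Z[1/p]$, and $\rperf$ surjects onto that ring. If $K\neq\rperf$, then $K\subseteq\fm\rperf$, so the images of its finitely many generators $g_{1},\dots,g_{n}$ satisfy $\delta:=\min_i v(g_i)>0$ (allowing $v=\infty$ for generators dying in the quotient); consequently every element of $K$ has value at least $\delta$. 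But $c^{1/p^{e}}\in K$ has value $v(c)/p^{e}$ with $v(c)<\infty$ because $c\notin\fp$, and $v(c)/p^{e}\to 0$, a contradiction once $v(c)/p^{e}<\delta$. This is precisely where finite generation of $K$ (hence coherence of $\rperf$) is indispensable: it provides the uniform positive lower bound $\delta$. Hence $K=\rperf$, so $r\in I\rperf\cap R=I^{F}$, and with this repair the remainder of your argument (cyclic $F$-purity gives $I=I^{F}=I^{*}$; excellent weakly $F$-regular rings are Cohen--Macaulay) goes through and coincides with the proof the paper cites.
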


The proof of \cref{theorem:f-pure-f-regular} uses a valuation argument to show that coherence of $\rperf$ implies the equality $I^F=I^*$ for all ideals $I\subseteq R$, and weakly $F$-regular excellent rings are well-known to be Cohen--Macaulay. Despite having a simple proof, \Cref{thm:ufd-gcd} has the following unexpected consequence when combined with \cref{theorem:f-pure-f-regular}.

\begin{corollary}\label{cor:gcd-fpure}
Let $R$ be a local Noetherian $F$-pure UFD of prime characteristic $p>0$. Then $\rperf$ is a GCD domain which is not coherent in either of the following two cases:
\begin{enumerate}
    \item $R$ is not weakly $F$-regular;\label{cor:gcd-fpure-1}
    \item $R$ is excellent and not Cohen--Macaulay.\label{cor:gcd-fpure-2}
\end{enumerate}
\end{corollary}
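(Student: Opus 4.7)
The plan is to observe that this corollary is essentially a direct combination of the two main ingredients already established in the excerpt, namely \Cref{thm:ufd-gcd} and \Cref{theorem:f-pure-f-regular}, once we verify that the hypotheses line up.

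First I would apply \Cref{thm:ufd-gcd} to conclude unconditionally that $\rperf$ is a GCD domain, using only that $R$ is a Noetherian UFD of characteristic $p$. The $F$-purity, local, and (non-)$F$-regularity / (non-)Cohen--Macaulay assumptions play no role in this part.

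Next I would argue non-coherence by contradiction: assume $\rperf$ is coherent and check that \Cref{theorem:f-pure-f-regular} applies. Since $R$ is a UFD it is a domain, hence reduced. Since $R$ is $F$-pure, it is cyclically $F$-pure (this implication holds in general, as noted in the paragraph preceding \Cref{theorem:f-pure-f-regular}). Thus $R$ is a reduced, cyclically $F$-pure local ring with coherent perfection, and \Cref{theorem:f-pure-f-regular} forces $R$ to be weakly $F$-regular, contradicting hypothesis (\ref{cor:gcd-fpure-1}). In case (\ref{cor:gcd-fpure-2}), the further excellence assumption lets us invoke the second statement of \Cref{theorem:f-pure-f-regular} to conclude $R$ is Cohen--Macaulay, contradicting hypothesis (\ref{cor:gcd-fpure-2}).

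I do not expect any real obstacle here: the substantive work has been done in proving \Cref{thm:ufd-gcd} and in Shimomoto's theorem, and the corollary is simply packaging them together. The only minor point to mention explicitly is the implication $F$-pure $\Rightarrow$ cyclically $F$-pure, so that the hypotheses of \Cref{theorem:f-pure-f-regular} are met; this justifies calling the result a corollary rather than a theorem.
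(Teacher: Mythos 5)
Your proposal is correct and matches the paper's proof essentially verbatim: the paper likewise cites \cref{thm:ufd-gcd} for the GCD property and then deduces non-coherence from the contrapositives of the two statements in \cref{theorem:f-pure-f-regular}, noting along the way that $F$-purity gives reducedness and cyclic $F$-purity. Your framing as a proof by contradiction rather than by contrapositive, and your observation that a UFD is automatically reduced, are only cosmetic differences.
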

\begin{proof}
    $\rperf$ is a GCD domain by \cref{thm:ufd-gcd}. Since $R$ is $F$-pure, it is reduced and cyclically $F$-pure. The fact that $\rperf$ is not coherent in the two cases above now follows from the contrapositives of both statements in \cref{theorem:f-pure-f-regular}, as desired.
\end{proof}
We demonstrate that rings satisfying the above hypotheses exist:
\begin{example}\label{ex:fpure-ufd-non-CM}
Let $\sk$ be a field of characteristic $2$ and let $B:=\sk[x_0,x_1,x_2,x_3]$. Consider the action of the group $G:=\Z/4\Z$ on $B$ induced by the $\sk$-algebra automorphism $\sigma(x_i)=x_{i+1}$ (where the indices are viewed $\mod 4$).  Let $\fm=(x_0,x_1,x_2,x_3)$ be the homogeneous maximal ideal of $B$, and let $A=B^G$ be the invariant subring. It is shown in \cite{Ber67} that $A$ is a UFD which is not Cohen--Macaulay. Moreover, $A$ is $F$-pure by \cite[Proposition 2.4(a)]{Gla95}. Let $\fn=\fm\cap A$, and denote $R=\widehat{A_\fn}$. $R$ is $F$-pure by \cite[Exercise 11 \& Corollary 2.3]{MP21}, and it is shown in \cite{FG75} that $R$ is a UFD. It follows that $\rperf$ is a non-coherent GCD domain by \cref{cor:gcd-fpure} (\ref{cor:gcd-fpure-2}). \hfill \qed
\end{example}

We can also use (\ref{cor:gcd-fpure-1}) instead of (\ref{cor:gcd-fpure-2}) in \cref{cor:gcd-fpure} to produce Cohen--Macaulay examples exhibiting the conclusion of \cref{theorem:main-theorem}.
\begin{example}\label{ex:fpure-ufd-non-fregular}
Let $p\equiv 1\mod 5$ and consider the degree $5$ diagonal hypersurface $$A=\F_p[x,y,z,u,v]/(x^5+y^5+z^5+u^5+v^5)$$ with homogeneous maximal ideal $\fm$. By the Jacobian criterion, $\fm$ defines the singular locus of $A$, hence $A$ and its $\fm$-adic completion $R:=\widehat{A}$ are UFDs by Grothendieck's parafactoriality theorem \cite[XI Corollaire 3.10 and XI Th\'{e}or\`{e}me 3.13(ii)]{SGA2} (see also \cite{CL94}). By the assumption on the characteristic, the coefficient of $(xyzuv)^{p-1}$ in the monomial expansion of $(x^5+y^5+z^5+u^5+v^5)^{p-1}$ is nonzero, hence $A$ and $R$ are both $F$-pure by Fedder's criterion \cite{Fed83}. However, it is well-known that $A$ and $R$ are not weakly $F$-regular. Indeed, the ideal $(y,z,u,v)$ is not tightly closed in either ring as one checks that $x^4\in (y,z,u,v)^*$ (see also \cite[Exercise 17]{MP21} and \cite[Example 1.6.3]{Hun96}). We then apply \cref{cor:gcd-fpure} (\ref{cor:gcd-fpure-1}) to conclude that $\rperf$ is a non-coherent GCD domain.

We remark that this reasoning applies more generally to any $F$-pure local hypersurface of dimension at least four which is not weakly $F$-regular and which has an isolated singularity.\hfill \qed
\end{example}

We may also deduce non-explicitly that there are plenty of the eponymous rings by combining \cref{cor:gcd-fpure} with a well-known result of Heitmann, as suggested to the author by B. Olberding:

\begin{corollary}\label{cor:precompletion}
    Let $(T,\fm)$ be a complete local domain with $\depth T\geq 2$ which is $F$-pure but not weakly $F$-regular. Then there exists an excellent local ring $(A,\fm\cap A)$ such that $\aperf$ is a non-coherent GCD domain and such that $\widehat{A}\cong T$.
\end{corollary}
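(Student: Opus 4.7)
The plan is to realize $T$ as the $\fm$-adic completion of an excellent local UFD $A$, and then apply \cref{cor:gcd-fpure}~(\ref{cor:gcd-fpure-1}) after verifying that $A$ inherits $F$-purity and the failure of weak $F$-regularity from $T$. The construction of $A$ appeals to Heitmann's theorem characterizing completions of local UFDs: every complete local domain of depth at least two arises as the completion of a local UFD. For our purposes one invokes a refinement of this construction (as in work of Loepp and others building on Heitmann) which simultaneously yields an \emph{excellent} local UFD $(A,\fn)$ with $\widehat{A}\cong T$ and which allows any prescribed finite subset of $T$ to be included inside $A$.

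With such an $A$ in hand I would transfer $F$-purity from $T$ via the faithfully flat inclusion $A\hookrightarrow T$: for any ideal $I\subseteq A$,
\[ I_A^F \;\subseteq\; (IT)_T^F \cap A \;=\; IT \cap A \;=\; I, \]
where the middle equality is $F$-purity of $T$ and the last is faithful flatness. Since $A$ is excellent and reduced, cyclic $F$-purity upgrades to $F$-purity. For the failure of weak $F$-regularity, I would fix an ideal $J=(y_1,\ldots,y_n)T$, an element $x\in J_T^{\ast}\setminus J$, and a nonzero $c\in T$ with $cx^{p^e}\in J^{[p^e]}$ for all $e\gg 0$. Using the flexibility in the construction of $A$, I would arrange $x,c,y_1,\ldots,y_n\in A$ and put $I=(y_1,\ldots,y_n)A$. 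Faithful flatness then gives $cx^{p^e}\in I^{[p^e]}T\cap A = I^{[p^e]}$ and $x\notin I$ (since $x\notin J = IT$), so $x\in I_A^{\ast}\setminus I$ and $A$ is not weakly $F$-regular. \Cref{cor:gcd-fpure}~(\ref{cor:gcd-fpure-1}) then delivers the desired conclusion.

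The main obstacle is the opening step: Heitmann's original construction produces local UFDs that are typically not excellent, so one must appeal to a strengthening that produces an \emph{excellent} UFD while retaining enough flexibility to absorb the finitely many elements $x,c,y_i$ witnessing non-weak-$F$-regularity in $T$. Once that ingredient is in place, the remaining transfers reduce to routine applications of faithful flatness.
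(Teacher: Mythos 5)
You correctly identify the skeleton (Heitmann UFD plus transfer plus \cref{cor:gcd-fpure}~(\ref{cor:gcd-fpure-1})), but your proof has a genuine gap at exactly the point you flag as the ``main obstacle,'' and the obstacle is resolved differently than you suppose. You assume Heitmann's construction must be supplemented by an unnamed refinement that simultaneously (a)~produces an \emph{excellent} local UFD and (b)~absorbs a prescribed finite subset $\{x,c,y_1,\ldots,y_n\}$ of $T$ into $A$. No such theorem is invoked or needed: \cite[Theorem 8]{Hei93} itself already produces an excellent local UFD $A$ with $\widehat{A}\cong T$ whenever $T$ is a complete local domain with $\depth T\geq 2$ --- this is precisely why the depth hypothesis appears in the statement --- so (a) is immediate. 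Requirement (b), on the other hand, is not supplied by Heitmann's theorem, and the prescribed-element strengthenings you allude to (in the Loepp school) come with restrictive hypotheses that are not verified here; as written, your argument for the failure of weak $F$-regularity rests on an unproven ingredient.

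The repair is to drop the element-chasing entirely. Since $A$ is excellent, weak $F$-regularity ascends to the completion by \cite[Corollary 7.28]{HH94}; contrapositively, $T=\widehat{A}$ not weakly $F$-regular forces $A$ not weakly $F$-regular, with no need to locate the witnesses $x$, $c$, $y_i$ inside $A$. This is exactly the paper's route. Your descent of $F$-purity is correct (the containment $I^F_A\subseteq (IT)^F_T\cap A=IT\cap A=I$ gives cyclic $F$-purity, which upgrades to $F$-purity for excellent reduced rings by Hochster's theorem), though it can be done in one line: the composite $A\to T\to F_*T$ is pure and factors as $A\to F_*A\to F_*T$, so $A\to F_*A$ is pure. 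With these two repairs your argument collapses onto the paper's proof.
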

\begin{proof}
    We use \cite[Theorem 8]{Hei93} to construct an excellent local UFD $A$ such that $\widehat{A}\cong T$. Since $T$ is not weakly $F$-regular, the excellence of $A$ together with \cite[Corollary 7.28]{HH94} implies that $A$ is not weakly $F$-regular. Since $A$ is $F$-pure, we may apply \cref{cor:gcd-fpure} (\ref{cor:gcd-fpure-1}) to conclude that $\aperf$ is a non-coherent GCD domain.
\end{proof}

We conclude with the following remarks.

\begin{remark}
\begin{enumerate}
    \item \cref{cor:gcd-fpure} (\ref{cor:gcd-fpure-2}) does not provide additional content to \cref{cor:precompletion} because all rings under consideration are excellent, and excellent weakly $F$-regular rings are Cohen--Macaulay by \cite[Proposition 4.2(c)]{HH94}.
    \item In view of \Cref{thm:ufd-gcd} we may ask whether the property of $\rperf$ being a GCD domain characterizes those rings $R$ sharing a perfect closure with a prime characteristic UFD.
    \item \cref{cor:precompletion} suggests a potential approach to showing that the completion of a local $F$-coherent ring need not be $F$-coherent. The existence of such a ring is unknown to the author.
\end{enumerate}
     \end{remark}

\printbibliography

@article{Gla01,
  title={Finite conductor rings},
  author={Glaz, Sarah},
  journal={Proceedings of the American Mathematical Society},
  volume={129},
  number={10},
  pages={2833--2843},
  year={2001},
  doi = {10.1090/S0002-9939-00-05882-2},
}

@article{Zha09,
author = {Wenliang Zhang},
title = {On the {F}robenius Power and Colon Ideals},
journal = {Communications in Algebra},
volume = {37},
number = {7},
pages = {2391-2395},
year  = {2009},
publisher = {Taylor & Francis},
doi = {10.1080/00927870802216438},
}

@article{Fed83,
    AUTHOR = {Fedder, Richard},
     TITLE = {{$F$}-purity and rational singularity},
   JOURNAL = {Trans. Amer. Math. Soc.},
  FJOURNAL = {Transactions of the American Mathematical Society},
    VOLUME = {278},
      YEAR = {1983},
    NUMBER = {2},
     PAGES = {461--480},
       DOI = {10.2307/1999165},
}

@article{HH94,
AUTHOR = {Hochster, Melvin and Huneke, Craig},
     TITLE = {{$F$}-regularity, test elements, and smooth base change},
   JOURNAL = {Trans. Amer. Math. Soc.},
  FJOURNAL = {Transactions of the American Mathematical Society},
    VOLUME = {346},
      YEAR = {1994},
    NUMBER = {1},
     PAGES = {1--62},
       DOI = {10.2307/2154942},
}

@book {Hun96,
    AUTHOR = {Huneke, Craig},
     TITLE = {Tight closure and its applications},
    SERIES = {CBMS Regional Conference Series in Mathematics},
    VOLUME = {88},
      NOTE = {With an appendix by Melvin Hochster},
 PUBLISHER = {Conference Board of the Mathematical Sciences, Washington, DC;
              by the American Mathematical Society, Providence, RI},
      YEAR = {1996},
     PAGES = {x+137},
      ISBN = {0-8218-0412-X},
       DOI = {10.1016/0167-4889(95)00136-0},
}

@book{SGA2,
    AUTHOR = {Grothendieck, Alexander},
     TITLE = {Cohomologie locale des faisceaux coh\'{e}rents et
              th\'{e}or\`emes de {L}efschetz locaux et globaux {$(SGA$}
              {$2)$}},
    SERIES = {Advanced Studies in Pure Mathematics},
    VOLUME = {2},
      NOTE = {Augment\'{e} d'un expos\'{e} par Mich\`ele Raynaud,
              S\'{e}minaire de G\'{e}om\'{e}trie Alg\'{e}brique du
              Bois-Marie, 1962},
 PUBLISHER = {North-Holland Publishing Co., Amsterdam; Masson \& Cie,
              Editeur, Paris},
      YEAR = {1968},
     PAGES = {vii+287},
shorthand = {{$\textup{SGA}\textup{2}$}},
}

@incollection{CL94,
    AUTHOR = {Call, Frederick and Lyubeznik, Gennady},
     TITLE = {A simple proof of {G}rothendieck's theorem on the
              parafactoriality of local rings},
 BOOKTITLE = {Commutative algebra: syzygies, multiplicities, and birational
              algebra ({S}outh {H}adley, {MA}, 1992)},
    SERIES = {Contemp. Math.},
    VOLUME = {159},
     PAGES = {15--18},
 PUBLISHER = {Amer. Math. Soc., Providence, RI},
      YEAR = {1994},
      ISBN = {0-8218-5188-8},
       DOI = {10.1090/conm/159/01500},
}

@article {Hoc77,
    AUTHOR = {Hochster, Melvin},
     TITLE = {Cyclic purity versus purity in excellent {N}oetherian rings},
   JOURNAL = {Trans. Amer. Math. Soc.},
  FJOURNAL = {Transactions of the American Mathematical Society},
    VOLUME = {231},
      YEAR = {1977},
    NUMBER = {2},
     PAGES = {463--488},
       DOI = {10.2307/1997914},
}

@article{Hei93,
 author = {Raymond C. Heitmann},
 journal = {Transactions of the American Mathematical Society},
 doi = {10.2307/2154327},
 number = {1},
 pages = {379--387},
 publisher = {American Mathematical Society},
 title = {Characterization of Completions of Unique Factorization Domains},
 volume = {337},
 year = {1993}
}

@article{Kun69,
 author = {Ernst Kunz},
 journal = {American Journal of Mathematics},
 number = {3},
 pages = {772--784},
 publisher = {Johns Hopkins University Press},
 title = {Characterizations of Regular Local Rings of Characteristic {$p$}},
 volume = {91},
 year = {1969},
 doi = {10.2307/2373351},
}

@article{Rad83,
address = {București :},
author = {Nicolae Radu},
journal = {Analele Universităţii Bucureşti. Matematică},
lccn = {77645021},
pages = {65-68},
publisher = {Universitatea din București,},
title = {Une classe d'anneaux presque laskériens},
volume = {32},
year = {1983},
}

@article{Pat22,
  doi = {10.1090/bproc/121},
  year = {2022},
  month = mar,
  publisher = {American Mathematical Society ({AMS})},
  volume = {9},
  number = {9},
  pages = {75--89},
  author = {Shravan Patankar},
  title = {Coherence of absolute integral closures},
  journal = {Proceedings of the American Mathematical Society,  Series B}
}

@article{Gla95,
title = {The {C}ohen-{M}acaulay Property and {$F$}-Rationality in Certain Rings of Invariants},
journal = {Journal of Algebra},
volume = {176},
number = {3},
pages = {824-860},
year = {1995},
doi = {10.1006/jabr.1995.1275},
author = {Donna Glassbrenner},
}

@misc{MP21,
Author = {Linquan Ma and Thomas Polstra},
Title = {{$F$}-singularities: a commutative algebra approach},
Year = {2021},
journal = {preprint},
url = {https://www.math.purdue.edu/~ma326/F-singularitiesBook.pdf}
}

@inbook{OS03,
    AUTHOR = {Olberding, Bruce and Saydam, Serpil},
     TITLE = {Ultraproducts of commutative rings},
 BOOKTITLE = {Commutative ring theory and applications ({F}ez, 2001)},
    SERIES = {Lecture Notes in Pure and Appl. Math.},
    VOLUME = {231},
     PAGES = {369--386},
 PUBLISHER = {Dekker, New York},
      YEAR = {2003},
      ISBN = {0-8247-0855-5},
      DOI = {10.1201/9780203910627},
   MRCLASS = {13L05},
  MRNUMBER = {2029839},
MRREVIEWER = {Hans\ Schoutens},
}

@article{Shi11,
title = {F-coherent rings with applications to tight closure theory},
journal = {Journal of Algebra},
volume = {338},
number = {1},
pages = {24-34},
year = {2011},
doi = {10.1016/j.jalgebra.2011.05.006},
author = {Kazuma Shimomoto},
keywords = {Coherent ring, Frobenius map, Perfect ring, Tight closure, Valuation theory},
}

@online{Asg17,
Author = {Mohsen Asgharzadeh},
Title = {Homological aspects of perfect algebras},
Year = {2017},
eprinttype = {arxiv},
eprint = {1005.1462v5},
}

@article{Ber67,
journal = {Comptes Rendus Hebdomadaires des Séances de l'Académie des Sciences. Séries A et B},
pages = {653-656},
author = {Bertin, Marie-Jos\'{e}},
title = {Anneaux d'invariants d'anneaux de polynomes, en caractéristique {$p$}},
volume = {264},
year = {1967},
}

@article{FG75,
     author = {Fossum, Robert M. and Griffith, Phillip A.},
     title = {Complete local factorial rings which are not {Cohen-Macaulay} in characteristic {$p$}},
     journal = {Annales scientifiques de l'\'Ecole Normale Sup\'erieure},
     pages = {189--199},
     publisher = {Elsevier},
     volume = {Ser. 4, 8},
     number = {2},
     year = {1975},
     doi = {10.24033/asens.1285},
     zbl = {0303.13015},
     mrnumber = {52 #3142},
     language = {en},
}

@article{AH97,
title = {Finite {T}or dimension and failure of coherence in absolute integral closures},
journal = {Journal of Pure and Applied Algebra},
volume = {122},
number = {3},
pages = {171-184},
year = {1997},
doi = {10.1016/S0022-4049(97)00049-2},
author = {Ian M. Aberbach and Melvin Hochster},
}

\end{document}